\newcommand{\stirling}[2]{\biggl[\genfrac{}{}{0pt}{}{#1}{#2}\biggr]}
\newtheoremstyle{teoremas}
{10pt}
{10pt}
{\itshape}
{}
{\bfseries}
{}
{.5em}
{}
\theoremstyle{teoremas}
\newtheorem{teo}{Theorem}[section]
\newtheorem{conj}[teo]{Conjecture}
\theoremstyle{definition}
\newtheorem{defi}[teo]{Definition}
\newtheorem{rem}[teo]{Remark}
\DeclareMathOperator{\ehr}{ehr}
\DeclareMathOperator{\Des}{Des}
\DeclareMathOperator{\Vol}{Vol}
\title[]{Ehrhart positivity for a certain class of panhandle matroids}
\author[]{Daniel McGinnis}
\thanks{The author was supported by NSF grant DMS-1839918 (RTG)}
\begin{document}

\maketitle

\begin{abstract}
    We give a combinatorial formula for the Ehrhart coefficients of a certain class of weighted multi-hypersimplices. In a special case, where these polytopes coincide with the base polytope of the panhandle matroid $\textrm{Pan}_{k,n-2,n}$, we show that the Ehrhart coefficients are positive. 
\end{abstract}

\section{Introduction}
The \textit{Ehrhart polynomial} (introduced by Ehrhart in \cite{ehrhart}) of a polytope $P\subset \mathbb{R}^n$ with integral vertices is an invariant of $P$ which counts the number of integer coordinates lying inside integer dilates of $P$. Specifically, the Ehrhart function of $P$, denoted by $\ehr(P,t)$ takes as input a positive integer $t$ and outputs the quantity
\[
\ehr(P,t) = \#\left(tP \cap \mathbb{Z}^n \right),
\]
namely, the number of integer coordinates lying in $tP$. Ehrhart proved that this function is actually a polynomial in $t$ whose degree is the dimension of $P$. Therefore, for $d = \dim(P)$, we may write
\[
\ehr(P,t) = a_dt^d +a_{d-1}t^{d-1} + \cdots + a_0.
\]
An important feature of the Ehrhart polynomial is that $a_d = \Vol(P)$ and $a_{d-1} = \frac{1}{2}\Vol(\partial P)$ (see \cite{beck-robins} \cite{beck-sanyal} for proofs and more information). It is also known that $a_0 =1$, but the remaining coefficients can be negative in general. Thus, an interesting problem which has received a significant amount of attention is to determine families of polytopes having the property that their Ehrhart polynomials have positive coefficients. These polytopes are then called \textit{Ehrhart positive}. Additionally, it is of interest to determine a combinatorial or geometric meaning to the Ehrhart coefficients of Ehrhart positive polytopes. See \cite{liu} for a survey on Ehrhart positivity.

In this paper we extend further upon the work of \cite{FerroniLattice2022}, where it is shown that the Ehrhart polynomials of polytopes of the form
\[
\mathscr{R}_{k,\mathbf{c}}=\left\{x\in [0,c_1]\times \cdots \times [0,c_n] \mid \sum_{i=1}^n x_i=k\right\}.
\]
for positive integers $c_1,\dots,c_n$ and $k$ are Ehrhart positive, and a combinatorial formula is given for the coefficients as well. Note that when $c_1 =\cdots = c_n =1$, we recover the hypersimplex $\Delta_{k,n}$, so the work of \cite{FerroniLattice2022} extends the results of \cite{ferroniPositive}, in which the Ehrhart positivity of hypersimplices is proven using a generating function approach. A combinatorial proof of the the Ehrhart positivity of hypersimplices is given in \cite{hanelyetal} which relies only on an inclusion-exclusion argument. In this paper we attempt to further our current understanding of the Ehrhart coefficients of the hypersimplex by providing a more explicit combinatorial interpretation for these values. Additionally, in the analysis of the Ehrhart polynomial for paving matroids in \cite{hanelyetal}, a class of matroids called \textit{panhandle matroids} were conjectured to be Ehrhart positive. For positive integers $k\leq r < n$, The base polytope for the panhandle matroid $\textrm{Pan}_{k,r,n}$ is given by
\[
\left\{ x \in [0,1]^n \mid \sum_{i=1}^n x_i =k,\, \sum_{i=r+1}^n x_i \leq 1. \right\}
\]
Note the class of panhandle matroid polytopes contain the class of hypersimplices, which can be seen to be the case by taking $r=n-1$.

The class of polytopes described above are examples of  \textit{alcoved polytopes} introduced in \cite{lam-postnikov}, and more specifically, they are contained in the class of polytopes called \textit{weighted multi-hypersimplices} defined in the same paper.

One main result of this paper is to provide a combinatorial description for the Ehrhart coefficients of the weighted multi-hypersimplices of the following form:
\begin{align*}
&
&\left\{(x_1,\dots,x_n) \mid 0\leq x_i\leq c_i \textrm{ for all }1\leq i\leq n-2,\, 0\leq x_{n-1} + x_{n}\leq 1 \textrm{ and } \sum_{i=1}^n x_i=k \right\}
\end{align*}
for positive integers $c_1,\dots,c_{n-2}$ and $k$.

In the case that $\mathbf{c} = (1,\dots,1)$, this polytope coincides with the base polytope associated to the panhandle matroid $\textrm{Pan}_{k,n-2,n}$, and we are able to use our derived combinatorial formula to show that in this case, the polytope is Ehrhart positive. Although a promising approach to proving Ehrhart positivity for the general panhandle matroid $\textrm{Pan}_{k,s,n}$ via a solely enumerative combinatorial conjecture is outlined in \cite{hanelyetal}, our method of proof takes a substantially different route and follows more along the lines with the reasoning of \cite{FerroniLattice2022}. We hope that the ideas presented here will aid future research toward proving Ehrhart positivity for panhandle matroids and other classes of weighted multi-hypersimplices.

We note that the panhandle matroids are certain lattice path matroids \cite{hanelyetal} and hence, they lie within the class of \textit{positroids}, introduced in \cite{PostnikovTotalPositivity}.
It is conjectured in \cite{fjs} that positroids are Ehrhart positive (a matroid is said to be Ehrhart positive if its associated base polytope is Ehrhart positive). 

\begin{conj}[Conjecture 6.3 in \cite{fjs}]\label{con:positroids}
Positroids are Ehrhart positive.
\end{conj}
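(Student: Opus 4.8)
The plan is to attack Conjecture~\ref{con:positroids} by combining a reduction to connected positroids with the alcoved structure of positroid polytopes, thereby pushing the combinatorial machinery developed in this paper beyond a single panhandle constraint. First I would reduce to the connected case: if $M = M_1 \oplus M_2$ then the base polytope factors as a product $P_M = P_{M_1} \times P_{M_2}$, so that $\ehr(P_M,t) = \ehr(P_{M_1},t)\,\ehr(P_{M_2},t)$, and a product of polynomials with positive coefficients again has positive coefficients; since a direct sum of positroids is a positroid, it suffices to treat connected positroids. The essential structural input is then the description (due to Ardila--Rinc\'on--Williams) of positroid polytopes as the alcoved polytopes cut out inside the hypersimplex by the cyclic-interval rank inequalities
\[
\sum_{i \in I} x_i \le \rk(I), \qquad I \text{ a cyclic interval of } [n],
\]
together with $0 \le x_i \le 1$ and $\sum_{i=1}^n x_i = \rk(M)$. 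The panhandle polytope $\mathrm{Pan}_{k,n-2,n}$ treated here is precisely the case in which one nontrivial cyclic-interval constraint (the panhandle inequality) is imposed on top of the box, so the conjecture asks to extend the present analysis to an arbitrary family of such constraints.

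Second, I would try to produce a combinatorial formula for the Ehrhart coefficients in this general alcoved setting, mirroring the derivation carried out here and in \cite{FerroniLattice2022}. Concretely, one expresses $\ehr(P_M,t)$ through a half-open decomposition of the polytope compatible with the alcove triangulation of \cite{lam-postnikov}, records the lattice-point count of each half-open alcove via a descent statistic $\Des$ on an associated family of words, and then collects contributions by dilation degree. Since positroids are exactly the matroids realizable by totally nonnegative matrices \cite{PostnikovTotalPositivity}, one expects the relevant words to be indexed by objects respecting the cyclic flags of the positroid (Grassmann necklaces, or the lattice paths when $M$ is a lattice path matroid), so that each coefficient $a_j$ emerges as a sum, over subsets $S$ of coordinates and over admissible descent patterns, of products of nonnegative quantities. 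The target is to write each $a_j$ as a manifestly nonnegative expression, exactly as the panhandle formula of this paper does in the one-constraint case.

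The hard part will be controlling the inclusion--exclusion that arises when several cyclic intervals overlap or nest. With a single panhandle constraint the correction terms telescope into a shape whose positivity can be checked directly, but with a general collection of cyclic flats the naive inclusion--exclusion over violated constraints produces alternating signs, and it is far from clear how to reorganize these into a cancellation-free positive sum. This is precisely what makes the conjecture delicate: Ferroni has shown that arbitrary matroids need not be Ehrhart positive, so any successful argument must use the total-nonnegativity (equivalently, the cyclic-interval) structure essentially, rather than any general matroid-polytope bound. A secondary obstacle is keeping the descent-statistic bookkeeping valid when the cyclic flats of $M$ form a rich lattice, while matching the leading data $a_d = \Vol(P_M)$ and $a_{d-1} = \tfrac12\Vol(\partial P_M)$ as consistency checks. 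I would therefore first test the program on the next nontrivial families---the general panhandle matroids $\mathrm{Pan}_{k,s,n}$, and then lattice path matroids---to see whether the positive reorganization of the inclusion--exclusion persists before attempting the full positroid case.
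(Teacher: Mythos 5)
The statement you are trying to prove is a \emph{conjecture} (Conjecture 6.3 of \cite{fjs}), and the paper does not prove it; it only records it as motivation and then establishes the very special case of the panhandle matroids $\mathrm{Pan}_{k,n-2,n}$ (Theorem \ref{thm:positivity}), whose base polytopes are the weighted multi-hypersimplices $\Delta_{k,(1^{(n-2)},2),\mathbf{1}}$. Your text is not a proof but a research program, and you say so yourself: the decisive step --- reorganizing the inclusion--exclusion over violated cyclic-interval constraints into a cancellation-free positive sum --- is exactly the open problem, and nothing in your outline resolves it. So there is a genuine gap, and it is the whole of the conjecture: no argument is supplied that the coefficients are positive for any positroid beyond the families already known.

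That said, some of your preliminary reductions are sound and worth keeping separate from the speculative part. The reduction to connected positroids via $P_{M_1\oplus M_2}=P_{M_1}\times P_{M_2}$ and multiplicativity of Ehrhart polynomials is correct, and the description of positroid polytopes by cyclic-interval rank inequalities inside the hypersimplex is the right structural starting point. But be careful about how you frame the relation to this paper: the method here is not a half-open alcove decomposition in the sense of \cite{lam-postnikov}. The proof of Theorem \ref{thm:main} starts from an explicit generating-function formula (Theorem \ref{thm:PolyFormula}), expands the coefficient of $t^m$ as a signed sum of terms $a_1,\dots,a_5$, interprets each term as counting cycle-ordered, weighted permutations with prescribed properly/improperly weighted cycles, and then Theorem \ref{thm:positivity} proves positivity by constructing explicit injections $f_1,f_2$ from the negatively-counted sets $S_3,S_4$ into $S_1\cup S_2$ (splitting and merging the cycles containing $n-1$ and $n$) and exhibiting an element missed by both images. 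The paper itself warns that even the extension from $\mathbf{c}=\mathbf{1}$ to general $\mathbf{c}=(c_1,\dots,c_{n-2},1)$ fails because the injection $f_1$ does not carry over, and that general panhandle matroids would already require an alternating sum over permutations with varying numbers of cycles. Your program would need, at minimum, a replacement for these injections that works uniformly over all cyclic-interval constraint systems; absent that, the conjecture remains unproved.
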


Since we prove that a certain class of panhandle matroids are Ehrhart positive, our result supports Conjecture \ref{con:positroids}.  \textit{Notched rectangle matroids}, defined in \cite{fan-li} (presented as \textit{cuspidal matroids} in \cite{ferroni2022valuative}), are another class of lattice path matroids which contain panhandle matroids as a subclass. In both \cite{fan-li} and \cite{ferroni2022valuative}, the class of notched rectangle matroids (cuspidal matroids) are conjectured to be Ehrhart positive, hence, the results of this paper supports this conjecture as well.

It was originally  conjectured in \cite{DeLoeraEhrhart2009} that all matroids are Ehrhart positive, moreover, the even stronger conjecture that the larger class of \textit{generalized permutahedra} are Ehrhart positive was stated in \cite{castillo-liu}. However, both of these conjectures are shown to be false in \cite{FerroniMatroids2022} where examples of matroids with negative Ehrhart coefficients with rank between rank 3 and corank 3 are provided. However, it is shown in \cite{fjs} that matroids with rank 2 are Ehrhart positive, and it is noted in the same paper that all matroids of rank 2 are in addition positroids. 

Throughout the progression of ideas in \cite{castillo-liu}, \cite{CastilloTodd2021}, \cite{ferroniPositive}, \cite{fjs}, \cite{hanelyetal} and \cite{FerroniLattice2022}, it became clearer that the Ehrhart positivity of these matroid polytopes requires the introduction of complicated combinatorial structures whose enumeration yields a description of the Ehrhart coefficients, along with a proof of this positivity. We note and emphasize that in \cite{FerroniLattice2022}, such a combinatorial structure is particularly involved; moreover, in \cite{hanelyetal} the conjectured structure is challenging to understand. Our new main contribution is the description of a combinatorial gadget whose enumeration yields an arguably more elegant description of the coefficients of the Ehrhart polynomial of the hypersimplex, and we show how this allows extensions to other weighted multi-hypersimplices. It will become apparent to the reader that the search of a more general structure that covers more (if not, all) weighted multi-hypersimplices would demand a deep and possibly cumbersome combinatorial insight.

It is worth mentioning that the study of the $h^*$-polynomial for polytopes related to those discussed above is an intriguing and active area of research, although we obtain no new results in this direction. For instance, the $h^*$-polynomial of hypersimplices is shown to have very interesting combinatorial properties in \cite{li} and also in \cite{early} \cite{kim} using a different approach. The methods of \cite{kim} are also used in \cite{FerroniLattice2022} to find a combinatorial interpretation for the coefficients of the $h^*$-polynomial of $\mathscr{R}_{k,\mathbf{c}}$. Further research on the $h^*$-polynomial for alcoved polytopes can also be found in \cite{fjs} \cite{sinn-sjoberg} for instance.

\section{The Ehrhart coefficients for hypersimplices revisited}\label{Hypersimplex}

Recall that the hypersimplex $\Delta_{k,n}$ is the polytope given by

\[
\Delta_{k,n}=\left\{x\in [0,1]^n \mid \sum_{i=1}^n x_i=k\right\}.
\]

To write the formula for the Ehrhart polynomial of $\Delta_{k,n}$, we first set up the following notation.
Let $P^n_{a,b} = \sum_{a\leq i_1< \cdots < i_n \leq b} i_1\cdots i_n$, and let ${n\brack m}$ denote the number of permutations of $[n]$ with $m$ cycles, known as the unsigned Stirling numbers of the first kind. Recall that $P_{1,n-1}^{n-m} = {n\brack m}$.

The Ehrhart polynomial for the hypersimplex $\Delta_{k,n}$ is given by
\begin{align}
\ehr(\Delta_{k,n},t)&=\sum_{i=0}^{k-1}(-1)^i\binom{n}{i}\binom{(k-i)t-i+n-1}{n-1}\\
&=\frac{1}{(n-1)!}\sum_{m=0}^{n-1}t^m\sum_{i=0}^{k-1}(-1)^i\binom{n}{i}(k-i)^mP^{n-1-m}_{-i+1,n-1-i}.\label{hypersimplex formula}
\end{align}
This is proven for instance in \cite{ferroniPositive}.

\noindent \textbf{Notation for permutations.} For a permutation $\sigma$, let $C(\sigma)$ be the set of cycles in the cycle decomposition of $\sigma$. Additionally, if we write a permutation $p=[p_1,\dots,p_n]$ in one-line notation, the \textit{descent set of $p$}, $\Des(p)$, is the set of indices $1\leq i\leq n-1$ such that $p_i > p_{i+1}$, and $\textrm{des}(p)$ will denote the cardinality of $\Des(p)$, i.e., $\textrm{des}(p) = |\Des(p)|$. In general, parentheses are used to denote the cycles of a permutation written explicitly in its cycle decomposition, and brackets are used to denote a permutation written in one-line notation\\

\noindent The main combinatorial object of this paper is introduced in Definition \ref{def:main} below.

\begin{defi}\label{def:main}
A \textit{cycle-ordered, weighted permutation} of $[n]$ is a triple $(\sigma,p,w)$ where
\begin{itemize}
    \item $\sigma$ is a permutation of $[n]$.
    \item $p$ is a permutation of $[m]$ satisfying $p(m)=1$ where $m = |C(\sigma)|$ is the number of cycles of $\sigma$.
    \item $w:C(\sigma) \rightarrow \mathbb{N}_0$ is a function.
\end{itemize}

For each cycle $\mathfrak{c}\in C(\sigma)$, we call $w(\mathfrak{c})$ the \textit{weight} of $\mathfrak{c}$. The \textit{total weight} of $(\sigma,p,w)$, which we denote by $w(\sigma)$, is the sum of the weights of each cycle, namely,
\[
w(\sigma) = \sum_{\mathfrak{c}\in C(\sigma)} w(\mathfrak{c}).
\]
If we denote $k= w(\sigma) + \textrm{des}(p)$, then we say $(\sigma,p,w)$ is of \textit{type $(n,m,k)$}.
\end{defi}
We can think of $p$ as ordering the cycles of $\sigma$ according to the smallest elements of the cycles. For example, if $\sigma = (1\ 3)(2\ 6)(4\ 5)$ and $p = [3\ 2\ 1]$, then the order of the cycles of $\sigma$ according to $p$ is $(4\ 5)(2\ 6)(1\ 3)$. Essentially, the order of the cycles according to their smallest elements matches the order of the elements from $p$ when $p$ is written in one-line notation. The reason why we add the condition that $|C(\sigma)|\mapsto 1$ under $p$ is simply because these will be be the only orders of the cycles that will be relevant to us throughout the paper. We also note that because $|C(\sigma)|\mapsto 1$, the cycle of $\sigma$ containing 1 will always come last in the corresponding ordering of its cycles.

Weighted permutations (without an ordering on the cycles) were defined in \cite{hanelyetal}, providing another way to view previously defined combinatorial objects from \cite{ferroniPositive}, which are enumerated by the \textit{weighted Lah numbers}.

Here, we add an ordering to the cycles of weighted permutations to provide a more explicit combinatorial description for the coefficients of the Ehrhart polynomial of certain polytopes, including hypersimplices.

Let $\mathbf{c} = (c_1,\dots,c_n)$ be a tuple of positive integers. A cycle-ordered, weighted permutation $(\sigma,p,w)$ of $[n]$ is said to be \textit{$\mathbf{c}$-compatible} if 
\[
w(\mathfrak{c}) < \sum_{i\in \mathfrak{c}} c_i \textrm{ for all } \mathfrak{c}\in C(\sigma).
\]
We note that the notion of $\mathbf{c}$-compatibility was initially defined in \cite{FerroniLattice2022}.

\begin{teo}\label{thm:CombInt}
The coefficient of $t^m$ in $(n-1)!\ehr(\Delta_{k,n},t)$ is the number of $(1,\dots,1)$-compatible cycle-ordered, weighted permutations $(\sigma,p,w)$ of type $(n,m+1,k)$.
\end{teo}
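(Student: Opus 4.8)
The plan is to fix the underlying permutation $\sigma$ and count the data $(p,w)$ attached to it, exploiting that the two ingredients decouple. For a permutation $\sigma$ of $[n]$ with exactly $m+1$ cycles of lengths $\ell_1,\dots,\ell_{m+1}$, $(1,\dots,1)$-compatibility forces $w(\mathfrak c)\in\{0,1,\dots,\ell_j-1\}$, so the weightings of total weight $W$ are enumerated by $[q^W]\prod_{j}(1+q+\cdots+q^{\ell_j-1})=[q^W]\prod_j\frac{1-q^{\ell_j}}{1-q}$. On the other side, a permutation $p$ of $[m+1]$ with $p(m+1)=1$ has a forced descent in position $m$ (as $p_{m+1}=1$), and deleting this last entry identifies $[p_1,\dots,p_m]$ with an arbitrary permutation of $[m]$; hence for $m\ge 1$ the number of such $p$ with $\operatorname{des}(p)=D$ is the Eulerian number counting permutations of $[m]$ with $D-1$ descents. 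Writing $A_m(t)=\sum_{p'\in S_m}t^{\operatorname{des}(p')}$ for the Eulerian polynomial, the descent generating function of the admissible $p$ is $tA_m(t)$.

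Because the number of admissible $p$ depends only on $m$, the count factors as a convolution. Setting $H(t)=\sum_{\sigma}\prod_{\mathfrak c\in C(\sigma)}(1-t^{|\mathfrak c|})$ with the sum over permutations of $[n]$ having $m+1$ cycles, the generating function for total weight summed over all such $\sigma$ is $H(t)/(1-t)^{m+1}$, and the number of $(\sigma,p,w)$ of type $(n,m+1,k)$ is
\[
[t^k]\frac{H(t)}{(1-t)^{m+1}}\cdot tA_m(t)=[t^{k-1}]H(t)\cdot\frac{A_m(t)}{(1-t)^{m+1}}.
\]
I would then invoke the classical identity $\frac{A_m(t)}{(1-t)^{m+1}}=\sum_{j\ge0}(j+1)^m t^j$ (equivalently, Worpitzky), which turns this into $\sum_{j\ge0}(j+1)^m[t^{k-1-j}]H(t)$; reindexing by $i=k-1-j$ yields exactly $\sum_{i=0}^{k-1}(k-i)^m\,[t^i]H(t)$. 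This matches the shape of \eqref{hypersimplex formula}, so it remains to identify the coefficients of $H$.

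The final step is to show $[t^i]H(t)=(-1)^i\binom{n}{i}P^{n-1-m}_{-i+1,\,n-1-i}$. Expanding $\prod_{\mathfrak c}(1-t^{|\mathfrak c|})=\sum_{S\subseteq C(\sigma)}(-1)^{|S|}t^{\sum_{\mathfrak c\in S}|\mathfrak c|}$ and reorganizing the pair $(\sigma,S)$ by the union $A$ of the marked cycles, I would rewrite $[t^i]H(t)=\binom{n}{i}\sum_{a+b=m+1}(-1)^a{i\brack a}{n-i\brack b}$. Using the rising-factorial generating function $\sum_a{s\brack a}y^a=y(y+1)\cdots(y+s-1)$, this convolution equals $(-1)^i\binom{n}{i}$ times $[y^{m+1}]$ of $y^{\underline i}\,y^{\overline{\,n-i}}$, where $y^{\underline i}=y(y-1)\cdots(y-i+1)$ and $y^{\overline{\,n-i}}=y(y+1)\cdots(y+n-i-1)$. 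This polynomial is monic of degree $n$ with roots $\{0,1,\dots,i-1\}\cup\{0,-1,\dots,-(n-i-1)\}$, so the coefficient of $y^{m+1}$ is $(-1)^{n-1-m}e_{n-1-m}$ of the roots; since $0$ occurs, every surviving monomial avoids it, and $e_{n-1-m}$ reduces to the elementary symmetric polynomial of the nonzero roots $\{1,\dots,i-1\}\cup\{-1,\dots,-(n-i-1)\}$. This set is the negation of $\{-i+1,\dots,-1\}\cup\{1,\dots,n-1-i\}$, and the prefactor $(-1)^{n-1-m}$ absorbs the negation, producing precisely $P^{n-1-m}_{-i+1,n-1-i}=e_{n-1-m}(-i+1,\dots,n-1-i)$.

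The main obstacle is the bookkeeping in this last step: matching the signs and shifted index ranges so that the root set of $y^{\underline i}y^{\overline{\,n-i}}$ lines up, after negation and deletion of the double zero, with the arithmetic progression $\{-i+1,\dots,n-1-i\}$ defining $P^{n-1-m}_{-i+1,n-1-i}$. I would also treat $m=0$ separately, where there is no forced descent and the relevant generating function is $A_0(t)=1$ rather than $tA_0(t)$; there the count collapses to $[t^k]H(t)/(1-t)$, and since $H(t)=(n-1)!\,(1-t^n)$ has vanishing coefficients in degrees $1,\dots,n-1$, this recovers the constant term $(n-1)!$, consistent with $a_0=1$. A short verification of the summation range (no terms with $i\ge k$ arise, and $[t^i]H$ vanishes for $i>n$) then confirms the two expressions agree term by term.
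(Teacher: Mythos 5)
Your argument is correct, but it takes a genuinely different route from the paper's. The paper expands $\binom{n}{i}$ into a sum over subsets $A\subset[n]$ and $P^{n-1-m}_{-i+1,n-1-i}$ into a signed convolution of Stirling numbers, interprets each term $(k-i)^m{i\brack i-j}{n-i\brack m+1-i+j}$ as counting cycle-ordered, weighted permutations whose cycles inside $A$ are forced to be overweight ($w(\mathfrak{c})\geq \ell(\mathfrak{c})$), and finishes with a set-level inclusion--exclusion over the overweight cycles; the key lemma is an explicit bijection showing that $k^m{n\brack m+1}$ counts all cycle-ordered, weighted permutations of type $(n,m+1,k)$, which (as the paper remarks) implicitly reproves Worpitzky. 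You instead fix $\sigma$ and factor the count into a weight part and a descent part: the compatibility constraint is packaged into the numerator $\prod_j(1-q^{\ell_j})$ of the weight generating function (this is where your inclusion--exclusion lives), the descent statistic of $p$ contributes $tA_m(t)$, and the classical identity $A_m(t)/(1-t)^{m+1}=\sum_{j\geq 0}(j+1)^m t^j$ is invoked as a black box; your identification of $[t^i]H(t)$ is the same Stirling-number convolution the paper uses, read in the opposite direction via the rising/falling factorial roots. In effect you have reconstructed the derivation through weighted Lah numbers, $\sum_{\ell}W(n,m+1,\ell)A(m,k-1-\ell)$, which the paper discusses immediately after its proof as the alternative route of Ferroni and of Hanely et al. Your version buys a cleaner decoupling of $\sigma$ from $(p,w)$ and an off-the-shelf use of Worpitzky; the paper's version buys a self-contained argument whose subset-indexed inclusion--exclusion transfers directly to the $\mathbf{c}$-compatible and $(\mathbf{a},\mathbf{c})$-compatible settings used later, where subsets $A$ must be tracked by $\sum_{u\in A}c_u$ rather than by $|A|$ alone and the "fix $\sigma$ and factor" step is less immediate. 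Your edge cases ($m=0$, where the descent generating function is $A_0(t)=1$ rather than $tA_0(t)$, and $i=0$, where $0$ is only a simple root of $y^{\overline{\,n\,}}$) do work out as you indicate.
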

\begin{proof}
We have that
\begin{align*}
    &(n-1)!\ehr(\Delta_{k,n},t)\\
    &=\sum_{m=0}^{n-1}t^m\sum_{i=0}^{k-1}(-1)^i\binom{n}{i}(k-i)^mP^{n-1-m}_{-i+1,n-1-i}\\
    &=\sum_{m=0}^{n-1}t^m\sum_{i=0}^{k-1}(-1)^i\binom{n}{i}(k-i)^m\sum_{j=0}^{n-1-m}(-1)^jP_{1,i-1}^jP_{1,n-1-i}^{n-1-m-j}\\
    &=\sum_{m=0}^{n-1}t^m\sum_{i=0}^{k-1}\sum_{j=0}^{n-1-m}(-1)^{i-j}\binom{n}{i}(k-i)^m\stirling{i}{i-j}\stirling{n-i}{m+1-i+j}\\
    &=\sum_{m=0}^{n-1}t^m\sum_{i=0}^{k-1}\sum_{j=0}^{n-1-m}\sum_{\substack{A\subset [n]\\ |A|=i}}(-1)^{i-j}(k-i)^m\stirling{i}{i-j}\stirling{n-i}{m+1-i+j}
\end{align*}

We will show that for a fixed set $A\subset [n]$, the quantity $(k-i)^m{i\brack i-j}{n-i\brack m+1-i+j}$ is the number of cycle-ordered, weighted permutations $(\sigma,p, w)$ of type $(n,m+1,k)$ with the following properties. Here, $\ell(\mathfrak{c}) = \sum_{i\in \mathfrak{c}} 1$ denotes the length of $\mathfrak{c}$.
\begin{itemize}
    \item $i-j$ cycles consist only of elements in $A$.
    \item The remaining $m+1-(i-j)$ cycles consist only of elements from $[n]\setminus A$.
    \item For each cycle $\mathfrak{c}$ of $\sigma$ consisting of elements of $A$, $w(\mathfrak{c})\geq \ell(\mathfrak{c}).$
\end{itemize}

First we show this for the case that $i=0$, i.e., we demonstrate that $k^m{n\brack m+1}$ is the number of cycle-ordered, weighted permutations $(\sigma,p,w)$ of type $(n,m+1,k)$. 

Now, ${n\brack m+1}$ is the number of permutations of $[n]$ with $m+1$ cycles, and $k^m$ is the number of functions $f$ that assign each of the cycles that do not contain $1$ a value between $0$ and $k-1$ and assigns the cycle containing 1 the value $k$. Let $v_1 < \dots < v_p$ be the distinct values that were assigned to the cycles. Order the cycles that were assigned $v_1$ in an increasing manner according to their smallest element. Order the cycles that were assigned $v_2$ in the same way, and place them after the cycles that were assigned $v_1$. We continue in this way to obtain an ordering $\mathfrak{c}_1,\dots,\mathfrak{c}_{m+1}$ of the cycles of $\sigma$ (recall that $1\in \mathfrak{c}_{m+1}$). Let $p$ be the permutation of $[m+1]$ corresponding to this ordering. We define the weight $w(\mathfrak{c}_1)$ of $\mathfrak{c}_1$ to be $v_1$, and the weight of $\mathfrak{c_i}$ for $i>1$ is given by
\[
w(\mathfrak{c}_i) = \begin{cases}
f(\mathfrak{c}_i) - f(\mathfrak{c}_{i-1})-1 & \textrm{if } i-1\in \Des(p), \\
f(\mathfrak{c}_i) - f(\mathfrak{c}_{i-1}) & \textrm{otherwise}.
\end{cases}
\]
We have that $w(\sigma)=\sum_{i=1}^{m+1}w(\mathfrak{c}_i)=f(\mathfrak{c}_{m+1})-|\Des(p)|=k-|\Des(p(\sigma))|$. Thus $(\sigma,p,w)$ is of type $(n,m+1,k)$. Furthermore, any cycle-ordered, weighted permutation $(\sigma,p,w)$ of type $(n,m+1,k)$ with its cycles and ordering given by $\mathfrak{c}_1,\dots,\mathfrak{c}_{m+1}$ can be obtained in this way from the function $f$ defined by $f(\mathfrak{c}_i) =  |\Des(p)\cap \{1,\dots,i-1\}| + \sum_{j<i} w(\mathfrak{c}_j)$.

Note that for a given set $A$ of size $i$, ${i\brack i-j}{n-i\brack m+1-i+j}$ is the number of permutations of $[n]$ with $m+1$ cycles where $i-j$ cycles consist only of elements from $A$, and the remaining cycles consist only of elements from $[n]\setminus A$. Thus, by a similar reasoning to the arguments above, we have that $(k-i)^m{i\brack i-j}{n-i\brack m+1-i+j}$ is the number cycle-ordered, weighted permutations of type $(n,m+1,k-i)$, where $i-j$ cycles consist only of elements from $A$, and the remaining cycles consist only of elements from $[n]\setminus A$. Now, for each cycle $\mathfrak{c}$ of $\sigma$ that consists only of elements in $A$, we define $w'(\mathfrak{c})=w(\mathfrak{c}) + \ell(\mathfrak{c})$. Otherwise, we define $w'(\mathfrak{c}) = w(\mathfrak{c})$. The resulting cycle-ordered, weighted permutation $(\sigma,p,w')$ satisfies the bullet points above.

Thus, we have demonstrated that the quantity $(k-i)^m{i\brack i-j}{n-i\brack m+1-i+j}$ is the number of cycle-ordered, weighted permutations that satisfy the above bullet points.

For a cycle-ordered, weighted permutation $(\sigma,p, w)$ where $\sigma = \mathfrak{c}_1 \cdots \mathfrak{c}_{m+1}$, let $I$ be the set of indices $j$ for which $w(\mathfrak{c}_j) \geq \ell(\mathfrak{c}_j)$. For each, $J\subset I$, $(\sigma,p, w)$ contributes $(-1)^{|J|}$ in the sum of the coefficient of $t^m$ above in the term where $A= \bigcup_{j\in J} \mathfrak{c}_j$ and $i=|A|$ (we are slightly abusing notation by associating $\mathfrak{c}_j$ with the set of its elements).

Therefore, the total contribution of $(\sigma,p, w)$ to the sum is $\sum_{J\subset I}(-1)^{|J|}$, which is $0$ if $|I|\geq 1$ and $1$ if $I=\emptyset$. This completes the proof of the theorem.
\end{proof}

Let $A(n,k)$ denote the \textit{Eulerian numbers}, namely, the number of permutations of $[n]$ with $k$ descents, and let $W(n,m+1,\ell)$ denote the number of $(1,\dots,1)$-compatible weighted permutations $(\sigma,w)$ of $[n]$ (here there is no ordering $p$ of the cycles) with $m+1$ cycles. The numbers $W(n,m+1,\ell)$ are precisely the weighted Lah numbers defined in \cite{ferroniPositive}. It is shown in \cite{ferroniPositive} that the coefficient of $t^m$ in $(n-1)!\ehr(\Delta_{k,n},t)$ is given by
\[
\sum_{\ell =0}^{k-1} W(n,m+1,\ell)A(m,k-1-\ell).
\]
This result is proven in part by the use of \textit{Worpitzky's identity} (see \cite{GrahamConcrete} for instance) to further break up and rewrite equation (\ref{hypersimplex formula}). We see that this already implies Theorem \ref{thm:CombInt}. Indeed, $W(n,m+1,\ell)A(m,k-1-\ell)$ is the number of cycle-ordered, weighted permutations $(\sigma,p,w)$ with total weight $\ell$ where the permutation of $\{2,\dots,n\}$ given in one-line notation by $[p_1,\dots,p_{n-1}]$ has $k-1-\ell$ descents. This means that $p=[p_1,\dots,p_n]$ has $k-\ell$ descents since $p_n=1$. Thus, $(\sigma,p,w)$ is of type $(n,m+1,k)$ with total weight $\ell$. Since we are summing up over all $\ell$, we obtain the result of Theorem \ref{thm:CombInt}.

\begin{rem}
We note that the proof of Theorem \ref{thm:CombInt} implicitly contains a proof of Worpitzky's identity, which states that for positive integers $k$ and $m$
\[
k^m = \sum_{i=0}^{m-1} A(m,i)\binom{k+i}{m}.
\]
Indeed, by reasoning similar to that of the proof of Theorem \ref{thm:CombInt}, $k^m$ counts the number of pairs $(p,w)$ consisting of a permutation $p$ of $[m+1]$ where $p(m+1)=1$ and a weight function $w: [m+1]\rightarrow \mathbb{N}_0$ such that $|\Des(p)|+\sum_{i=1}^{m+1}w(i)=k$. On the other hand, $A(m,i)\binom{k+i}{m} = A(m,m-i-1)\binom{k-(m-i)+m}{m}$ is the number of such pairs $(p, w)$ where $|\Des(p)|= m-i$ since $\binom{k-(m-i)+m}{m}$ is the number of ways to distribute $k-(m-i)$ total weight among $m+1$ elements. Since we are summing up over $i$, we obtain Worpitzky's identity.
\end{rem}

In light of the previous remark, the proof of Theorem \ref{thm:CombInt} is similar in spirit to the proof that $[t^m](n-1)!\ehr(\Delta_{k,n},t) = \sum_{\ell =0}^{k-1} W(n,m+1,\ell)A(m,k-1-\ell)$ in \cite{hanelyetal} since they both involve the use of Worpitzky's identity to some extent along with an inclusion-exclusion argument. However, the proof we provide allows us to interpret the coefficient of $t^m$ in equation (\ref{hypersimplex formula}) more explicitly and keeps us from having to needlessly rewrite formulas via Worpitzky's identity later in the paper. Moreover, this interpretation of the coefficients allows for extensions to other weighted multi-hypersimplices as we will show.
\begin{rem}
For a tuple of integers $\mathbf{c}=(c_1,\dots,c_n)$, let $\mathscr{R}_{k,\mathbf{c}}$ be the polytope defined by
\[
\mathscr{R}_{k,\mathbf{c}}=\left\{x\in [0,c_1]\times \cdots \times [0,c_n] \mid \sum_{i=1}^n x_i=k\right\}.
\]
In \cite{FerroniLattice2022}, the authors show that the Ehrhart polynomial $\ehr(\mathscr{R}_{k,\mathbf{c}},t)$ has positive coefficients by expressing the coefficients with a combinatorial formula in terms of $\mathbf{c}$-compatible weighted permutations of $[n]$. Here, we describe the analogue of Theorem \ref{thm:CombInt} for the polytopes $\mathscr{R}_{k,\mathbf{c}}$.

For an integer $v$, let $\rho_{\mathbf{c},i}(v)$ be defined as
\begin{equation}\label{eq:def-rho} \rho_{\mathbf{c},i}(v) := \#\left\{ I\in \binom{[n]}{i} : \sum_{j \in I} c_j = v\right\}.\end{equation}

For example, if $\mathbf{c}$ consists of all $1$'s, then $\rho_{\mathbf{c},i}(i)=\binom{n}{i}$. It is shown in \cite{FerroniLattice2022} that the Ehrhart polynomial of $\mathscr{R}_{k,\mathbf{c}}$ can be written as
\[ \ehr(\mathscr{R}_{k,\mathbf{c}},t)= \sum_{i=0}^{k-1}(-1)^i\sum_{v=0}^{k-1} \binom{t(k-v)+n-1-i}{n-1} \rho_{\mathbf{c},i}(v), \]
and the coefficient of $t^m$ in this polynomial is 
\[
\frac{1}{(n-1)!}\sum_{v=0}^k (k-v)^m\, \sum_{i=0}^{n}(-1)^i\, P^{n-1-m}_{-i+1,n-1-i} \,\rho_{\mathbf{c},i}(v).
\]
Using the same reasoning as in the proof of Theorem \ref{thm:CombInt}, the quantity 
\[
\sum_{v=0}^k (k-v)^m\, \sum_{i=0}^{n}(-1)^i\, P^{n-1-m}_{-i+1,n-1-i} \,\rho_{\mathbf{c},i}(v)
\] 
is the number of $\mathbf{c}$-compatible cycle-ordered, weighted permutations of type $(n,m+1,k)$.

Specifically, following the proof of Theorem \ref{thm:CombInt}, we have that 
\begin{align*}
&\sum_{v=0}^k (k-v)^m\, \sum_{i=0}^{n}(-1)^i\, P^{n-1-m}_{-i+1,n-1-i} \,\rho_{\mathbf{c},i}(v)\\
=& \sum_{v=0}^k\sum_{i=0}^{n}\sum_{j=0}^{n-1-m}\sum_{\substack{A\subset \binom{n}{i}\\ \sum_{u\in A}c_u=v}}(-1)^{i-j}(k-v)^m
\stirling{i}{i-j}\stirling{n-i}{m+1-i+j}.
\end{align*}
And $$(k-v)^m
\stirling{i}{i-j}\stirling{n-i}{m+1-i+j}$$ is the number of cycle-ordered, weighted permutations of type $(n,m+1,k)$ such that for a fixed set $A\in \binom{n}{i}$ where $\sum_{u\in A} c_u =v$ we have the following properties:
\begin{itemize}
    \item $i-j$ cycles consist only of elements in $A$.
    \item The remaining $m+1-(i-j)$ cycles consist only of elements from $[n]\setminus A$.
    \item For each cycle $\mathfrak{c}$ of $\sigma$ consisting of elements of $A$, $w(\mathfrak{c})\geq \sum_{u \in \mathfrak{c}_j} c_u.$
\end{itemize}

For a cycle-ordered, weighted permutation $(\sigma, p, w)$ where $\sigma = \mathfrak{c}_1 \cdots \mathfrak{c}_{m+1}$, let $I$ be the set of indices $j$ such that $w(\mathfrak{c}_j) \geq \sum_{u \in \mathfrak{c}_j} c_u$. Then for each $J\subset I$, we have that $(\sigma, p, w)$ contributes the value $(-1)^{|J|}$ in the term of the sum above where $A = \bigcup_{j\in J} \mathfrak{c}_j$, $v = \sum_{u \in A} c_u$, and $i = |A|$. Hence, the total contribution is $0$ if $|I|\geq 1$ and the contribution is $1$ otherwise, namely, when $(\sigma, p, w)$ is a  $\mathbf{c}$-compatible cycle-ordered, weighted permutations of type $(n,m+1,k)$.
\end{rem}
\section{Formulas for Ehrhart polynomials}

Let $\mathbf{a}=(a_1,\ldots, a_r)$ be an $r$-tuple of positive integers such that $a_1+\cdots+a_r = n$ and let $\mathbf{c}=(c_1,\dots,c_r)$ be an $r$-tuple of positive integers. We denote what we call the weighted multi-hypersimplex of type $(k,\mathbf{a},\mathbf{c})$ as the polytope
    \[ \Delta_{k,\mathbf{a},\mathbf{c}} = \left\{ x\in \mathbb{R}^n_{\geq 0} : \sum_{i=1}^n x_i = k \text{ and } \sum_{j=1+a_1+\cdots+a_{i-1}}^{a_1+\cdots+a_i} x_j \leq c_i\text{ for all $1\leq i\leq r$} \right\},\]
where $a_0 := 0$. We note that these polytopes were explicitly defined in \cite{lam-postnikov}.

Here, we would like to find a formula for the Ehrhart polynomial of $\Delta_{k,\mathbf{a},\mathbf{c}}$ or at least show how it can be computed for particular cases.

For a given integer $u$, the number of nonnegative integer solutions to $\sum_{j=1}^{a_i}x_i=u$ is $\binom{u+a_i-1}{a_i-1}$. We can use this fact to compute 

$$\ehr(\Delta_{k,\mathbf{a},\mathbf{c}},t)=\#\left\{x\in \mathbb{Z}^n_{\geq 0} : \sum_{i=1}^n x_i = kt \text{ and } \sum_{j=1+a_1+\cdots+a_{i-1}}^{a_1+\cdots+a_i} x_j \leq c_it\text{ for all $1\leq i\leq r$}\right\}$$
for any positive integer $t$ as a coefficient of a product of polynomials.

Indeed,
\begin{align*}
\ehr(\Delta_{k,\mathbf{a},\mathbf{c}},t)&=[x^{kt}]\prod_{i=1}^r\left(\sum_{j=0}^{c_it}\binom{j+a_i-1}{a_i-1}x^j\right).
\end{align*}
We can write $\sum_{j=0}^{c_it}\binom{j+a_i-1}{a_i-1}x^j=\frac{1}{(a_i-1)!}D^{a_i-1}(1+x+\cdots+x^{c_it+a_i-1})=\frac{1}{(a_i-1)!}D^{a_i-1}\left(\frac{1-x^{c_it+a_i}}{1-x}\right)$, where $D^k$ denotes the $k$th derivative with respect to $x$. Therefore, we have the following theorem.
\begin{teo}\label{thm:PolyFormula}
For a positive integer $k$ and $r$-tuples $\mathbf{a}$ and $\mathbf{c}$,
\begin{align*}
\ehr(\Delta_{k,\mathbf{a},\mathbf{c}},t)&=[x^{kt}]\prod_{i=1}^r\left(\frac{1}{(a_i-1)!}D^{a_i-1}\left(\frac{1-x^{c_it+a_i}}{1-x}\right)\right).
\end{align*}
\end{teo}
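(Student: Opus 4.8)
The plan is to prove the identity by a direct generating-function computation, since essentially all of the required ingredients have already been assembled in the discussion preceding the statement. The key structural feature to exploit is that $\Delta_{k,\mathbf{a},\mathbf{c}}$ is cut out by constraints acting on disjoint blocks of coordinates together with the single global equation $\sum_i x_i = k$; this separability is precisely what makes a product of one-variable generating functions the natural tool.

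First I would pass to the $t$-th dilate: scaling every defining (in)equality by $t$ turns the total-sum condition into $\sum_i x_i = kt$ and the $i$-th block condition into (block sum) $\le c_i t$, so that $\ehr(\Delta_{k,\mathbf{a},\mathbf{c}},t)$ is exactly the number of nonnegative integer points meeting these scaled constraints, as displayed just before the theorem. Next I would decouple the blocks. Because the $r$ blocks involve disjoint variables and the only constraint linking them is $\sum_i x_i = kt$, the generating function in a bookkeeping variable $x$ whose exponent records the total sum factors as a product over blocks, and the quantity sought is the coefficient $[x^{kt}]$ of this product. For a single block of size $a_i$, stars and bars gives $\binom{j+a_i-1}{a_i-1}$ solutions with prescribed block sum $j$, and the block constraint restricts $0 \le j \le c_i t$; hence the $i$-th factor is $\sum_{j=0}^{c_i t} \binom{j+a_i-1}{a_i-1} x^j$, yielding the product expression already recorded in the excerpt.

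The final step is the differential rewriting of each factor. Using $\frac{1}{(a_i-1)!} D^{a_i-1} x^{j+a_i-1} = \binom{j+a_i-1}{a_i-1} x^j$, the $i$-th factor equals $\frac{1}{(a_i-1)!} D^{a_i-1}\bigl(x^{a_i-1} + \cdots + x^{c_i t + a_i - 1}\bigr)$; and since every monomial of degree at most $a_i-2$ is annihilated by $D^{a_i-1}$, one may harmlessly prepend the low-order terms to obtain the full geometric sum $1 + x + \cdots + x^{c_i t + a_i - 1} = \frac{1 - x^{c_i t + a_i}}{1-x}$. Substituting into the product and extracting $[x^{kt}]$ gives exactly the claimed formula. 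There is no serious obstacle here, as the argument is a matter of assembling observations already made; the only points requiring genuine care are the justification that the blocks decouple into a product (a consequence of the constraints acting on disjoint coordinate sets with a single shared total) and the benign insertion of the vanishing low-degree terms inside $D^{a_i-1}$.
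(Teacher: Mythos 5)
Your proposal is correct and follows essentially the same route as the paper: counting lattice points in the $t$-th dilate blockwise via stars and bars, expressing the answer as $[x^{kt}]$ of a product of truncated generating functions, and rewriting each factor as $\frac{1}{(a_i-1)!}D^{a_i-1}$ applied to the geometric sum $\frac{1-x^{c_it+a_i}}{1-x}$. The only addition you make is to spell out the (harmless) insertion of the low-degree terms annihilated by $D^{a_i-1}$, which the paper leaves implicit.
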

\vskip10pt

\noindent We will now use Theorem \ref{thm:PolyFormula} to explicitly compute the Ehrhart polynomials of the polytopes of the form $\Delta_{k,(1^{(n-2)},2),\mathbf{c}}$ ($1^{(n-2)}$ denotes $n-2$ copies of $1$). Although one could use Theorem \ref{thm:PolyFormula} to explicitly write down the Ehrhart polynomial of $\Delta_{k,\mathbf{a},\mathbf{c}}$ for other fixed choices of $\mathbf{a}$, the formulas become quite cumbersome. Since our combinatorial analysis in what follows does not seem to have a straightforward generalization, we focus only on this particular case. 
Let $\mathbf{c}'=(c_1,\dots,c_{n-2})$ be the first $n-2$ entries of $\mathbf{c}$. We have the following:
\begin{align*}
    &\ehr(\Delta_{k,(1,\dots,1,2),\mathbf{c}},t) = [x^{kt}]\prod_{i=1}^{n-2}\left(\frac{1-x^{c_it+1}}{1-x}\right)\left(\frac{1-(c_{n-1}t+2)x^{c_{n-1}t+1}+(c_{n-1}t+1)x^{c_{n-1}t+2}}{(1-x)^2}\right)\\
    &=[x^{kt}]\frac{1}{(1-x)^n}\Bigg{(}\prod_{i=1}^{n-2}\left(1-x^{c_it+1}\right) -(c_{n-1}t+2)x^{c_{n-1}t+1}\prod_{i=1}^{n-2}\left(1-x^{c_it+1}\right)\\
    &+ (c_{n-1}t+1)x^{c_{n-1}t+2}\prod_{i=1}^{n-2}\left(1-x^{c_it+1}\right)\Bigg{)}\\
    &=\sum_{i=0}^{k-1}(-1)^i\sum_{v=0}^{k-1}\binom{(k-v)t-i+n-1}{n-1}\rho_{\mathbf{c}',i}(v)\\
    &-(c_{n-1}t+2)\sum_{i=0}^{k-1}(-1)^i\sum_{v=0}^{k-c_{n-1}-1}\binom{(k-v-c_{n-1})t-i-1+n-1}{n-1}\rho_{\mathbf{c}',i}(v)\\
    &+ (c_{n-1}t+1)\sum_{i=0}^{k-1}(-1)^i\sum_{v=0}^{k-c_{n-1}-1}\binom{(k-v-c_{n-1})t-i-2+n-1}{n-1}\rho_{\mathbf{c}',i}(v)\\
    &=\frac{1}{(n-1)!}\Bigg{(}\sum_{m=0}^{n-1}t^m\sum_{i=0}^{k-1}(-1)^i\sum_{v=0}^{k-1}(k-v)^mP^{n-1-m}_{-i+1,n-1-i}\rho_{\mathbf{c}',i}(v)\\
    &-c_{n-1}\sum_{m=0}^{n-1}t^{m+1}\sum_{i=0}^{k-1}(-1)^i\sum_{v=0}^{k-c_{n-1}-1}(k-v-c_{n-1})^mP^{n-1-m}_{-i,n-2-i}\rho_{\mathbf{c}',i}(v)\\
    &-2\sum_{m=0}^{n-1}t^m\sum_{i=0}^{k-1}(-1)^i\sum_{v=0}^{k-c_{n-1}-1}(k-v-c_{n-1})^mP^{n-1-m}_{-i,n-2-i}\rho_{\mathbf{c}',i}(v)\\
    &+ c_{n-1}\sum_{m=0}^{n-1}t^{m+1}\sum_{i=0}^{k-1}(-1)^i\sum_{v=0}^{k-c_{n-1}-1}(k-v-c_{n-1})^mP^{n-1-m}_{-i-1,n-3-i}\rho_{\mathbf{c}',i}(v)\\
    &+\sum_{m=0}^{n-1}t^m\sum_{i=0}^{k-1}(-1)^i\sum_{v=0}^{k-c_{n-1}-1}(k-v-c_{n-1})^mP^{n-1-m}_{-i-1,n-3-i}\rho_{\mathbf{c}',i}(v)\Bigg{)}.
\end{align*}

\section{A combinatorial formula for the Ehrhart coefficients}

When $c_{n-1}=1$, we will use the above formula in the proof of Theorem \ref{thm:main} below. First we will need the following definition for the statement of Theorem \ref{thm:main}.

\begin{defi}\label{def:mainComb}
For $\mathbf{a}=(1^{(n-2)},2)$ and $\mathbf{c}=(c_1,\dots,c_{n-2},1)$, we will say that a cycle $\mathfrak{c} \in C(\sigma)$ of a cycle-ordered, weighted permutation $(\sigma,p,w)$ is \textit{properly $(\mathbf{a},\mathbf{c})$-weighted} if the following conditions are satisfied:
\begin{enumerate}
    \item If neither $n-1$ or $n$ are in $\mathfrak{c}$, then $w(\mathfrak{c}) < \sum_{i\in \mathfrak{c}}c_i$.
    \item If $n-1$ or $n$ (or both) are in $\mathfrak{c}$, then $w(\mathfrak{c}) < 1 + \sum_{i\in \mathfrak{c},\, i\leq n-2}c_i$.
\end{enumerate}
If a cycle is not properly $(\mathbf{a},\mathbf{c})$-weighted then we say it is \textit{improperly $(\mathbf{a},\mathbf{c})$-weighted}. Moreover, a cycle-ordered, weighted permutation $(\sigma,p,w)$ is said to be \textit{$(\mathbf{a},\mathbf{c})$-compatible} if each cycle of $\sigma$ is properly $(\mathbf{a},\mathbf{c})$-weighted.
\end{defi}

We make Definition \ref{def:mainComb} solely for the purpose of articulating Theorem \ref{thm:main}. We do not attempt to generalize this definition for other values of $\mathbf{a}$ and $\mathbf{c}$ since we are not confident on what the definition should be in general.

We are now ready to state and prove Theorem \ref{thm:main}.

\begin{teo}\label{thm:main}
The coefficient of $t^m$ in $(n-1)!\ehr(\Delta_{k,(1^{(n-2)},2),(c_1,\dots,c_{n-2},1)},t)$ can be described as the following.

If $k<n-1$ or $m+1>1$, let $S_1$ be the set of $(\mathbf{a},\mathbf{c})$-compatible cycle-ordered, weighted permutations of type $(n,m+1,k)$. Otherwise, if $k=n-1$ and $m+1=1$, let $S_1$ be the set of cycle-ordered, weighted permutations of type $(n,1,n-1)$ (note that these cycle-ordered, weighted permutations are not $(\mathbf{a},\mathbf{c})$-compatible).

Let $S_2$ be the set of cycle-ordered, weighted permutations of type $(n,m+1,k)$ such that
\begin{itemize}
    \item the cycles of $\sigma$ not containing $n-1$ are properly $(\mathbf{a},\mathbf{c})$-weighted,
    \item $n-1$ and $n$ are in different cycles,
    \item the cycle containing $n-1$ is improperly $(\mathbf{a},\mathbf{c})$-weighted,
    \item the cycle $\mathfrak{c}$ containing $n$ satisfies
    \[
    w(\mathfrak{c}) = \sum_{i\in \mathfrak{c},\, i\neq n}c_i.
    \]
\end{itemize}

Let $S_3$ be the set of cycle-ordered, weighted permutations $(\sigma,p,w)$ of type $(n,m,k)$ such that
\begin{itemize}
    \item the cycles not containing $n-1$ or $n$ are properly $(\mathbf{a},\mathbf{c})$-weighted,
    \item $n-1$ and $n$ are in distinct cycles,
    \item the cycle containing $n-1$ is improperly $(\mathbf{a},\mathbf{c})$-weighted,
    \item the cycle $\mathfrak{c}$ containing $n$ satisfies $w(\mathfrak{c}) < \sum_{i\in \mathfrak{c},\, i\neq n}c_i$.
\end{itemize}

If $m>1$, let $S_4$ be the set of cycle-ordered, weighted permutations $(\sigma,p,w)$ of type $(n,m,k)$ such that $n-1$ and $n$ are in the same cycle, which is improperly  $(\mathbf{a},\mathbf{c})$-weighted, and the remaining cycles are properly $(\mathbf{a},\mathbf{c})$-weighted. Otherwise, if $m=1$, let $S_4=\emptyset$.

Then we have that
\[
[t^m](n-1)!\ehr(\Delta_{k,(1^{n-2},2),(c_1,\dots,c_{n-2},1)},t)= |S_1|+|S_2|-|S_3|-|S_4|=|S_1\cup S_2| - |S_3 \cup S_4|.
\]
\end{teo}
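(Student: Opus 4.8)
The plan is to start from the explicit polynomial expansion of $\ehr(\Delta_{k,(1^{(n-2)},2),(c_1,\dots,c_{n-2},1)},t)$ established at the end of Section 3 (with $c_{n-1}=1$), extract the coefficient of $t^m$, and give each of the resulting five summation terms a combinatorial meaning via the machinery of Theorem \ref{thm:CombInt} and the Remark that follows it. Concretely, I would expand every factor $P^{n-1-m}_{a,b}$ appearing in the five terms into products of unsigned Stirling numbers of the first kind, exactly as in the chain of equalities proving Theorem \ref{thm:CombInt}, so that each term becomes a signed sum over a flagged set $A\in\binom{[n-2]}{i}$ of a Stirling product ${i\brack i-j}{n-i\brack \cdots}$ times a power $(k-v)^m$ or $(k-v-1)^m$. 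Each such product counts cycle-ordered, weighted permutations with a prescribed partition of the cycles into those using only elements of $A$ and those avoiding $A$, and the power-of-$k$ factor builds the weight function by the same Worpitzky-type construction used in the proof of Theorem \ref{thm:CombInt}.

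Next I would run the inclusion-exclusion argument on the flaggable cycles. Because the flagging sets $A$ are drawn from $\binom{[n-2]}{i}$ (which is precisely what $\rho_{\mathbf{c}',i}$ records), only cycles contained in $[n-2]$ can be flagged, so summing $(-1)^{|J|}$ over subsets $J$ of the improperly-weighted non-panhandle cycles annihilates every permutation possessing such a cycle. Within each of the five terms this forces the non-panhandle cycles to be properly $(\mathbf{a},\mathbf{c})$-weighted (condition (1) of Definition \ref{def:mainComb}), reproducing verbatim the cancellation of Theorem \ref{thm:CombInt}. What survives is a signed count indexed by the configuration of the panhandle elements $n-1$ and $n$.

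The heart of the argument is then to identify how the three panhandle numerator summands $1$, $-(c_{n-1}t+2)x^{c_{n-1}t+1}$ and $(c_{n-1}t+1)x^{c_{n-1}t+2}$ (produced by $D^{a_{n-1}-1}=D^1$ applied to $\frac{1-x^{t+2}}{1-x}$) assemble into the four-set formula. I would track two pieces of data: whether $n-1$ and $n$ lie in one cycle or in two distinct cycles, and the weights of the cycle(s) containing them. The "$1$" summand yields the type-$(n,m+1,k)$ permutations whose non-panhandle cycles are properly weighted but whose panhandle cycle is unconstrained; from these the genuinely $(\mathbf{a},\mathbf{c})$-compatible ones form $S_1$, while the improperly-weighted-panhandle ones must be removed by the shifted summands. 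The two shifts, whose linear-in-$t$ coefficients $(c_{n-1}t+2)$ and $(c_{n-1}t+1)$ count the ways to split the overflowing panhandle weight between $x_{n-1}$ and $x_n$, contribute the correction families: matching the degree shift $t^{m+1}$ versus $t^m$ to the cycle-count types $(n,m+1,k)$ versus $(n,m,k)$, and the boundary of the binomial truncation to the equality-versus-strict-inequality weight conditions, should leave exactly $S_2$ with a plus sign and $S_3,S_4$ with minus signs. Finally I would check the low-degree boundary cases ($k=n-1$ with $m+1=1$, and $m=1$), where the Worpitzky construction degenerates or a prescribed configuration is empty, which is what forces the piecewise definitions of $S_1$ and $S_4$.

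The step I expect to be the main obstacle is precisely this last matching for the panhandle cycle. Unlike the single-capacity elements, the derivative $D^1$ turns the panhandle flag into $t$-dependent coefficients, so the two special elements feed into both the $t^m$ and the $t^{m+1}$ coefficients and the naive inclusion-exclusion no longer collapses to one clean family. Disentangling the same-cycle versus distinct-cycle cases, and proving that the residual signs and weight conditions align exactly with the delicate definitions of $S_2$ (weight equality for the cycle containing $n$), $S_3$ (strict inequality, type $(n,m,k)$) and $S_4$ (both special elements in a single improper cycle), is where the most careful bookkeeping will be needed.
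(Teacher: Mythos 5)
Your road map coincides with the paper's own proof: both start from the expansion at the end of Section~3 specialized to $c_{n-1}=1$, read off the five summands $a_1,\dots,a_5$ contributing to $[t^m]$, interpret each one through the Stirling-number/Worpitzky machinery of Theorem~\ref{thm:CombInt}, and exploit the fact that $\rho_{\mathbf{c}',i}$ only flags subsets of $[n-2]$, so that the inclusion--exclusion annihilates any permutation with an improperly weighted cycle avoiding $n-1$ and $n$. The boundary cases you single out ($k=n-1$ with $m+1=1$, and $m=1$) are exactly the ones the paper handles separately in the definitions of $S_1$ and $S_4$.

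The difficulty is that the step you explicitly defer --- ``proving that the residual signs and weight conditions align exactly with the delicate definitions of $S_2$, $S_3$ and $S_4$'' --- is not a routine verification but the entire content of the theorem, and your proposal gives no mechanism for carrying it out. The paper does this by introducing eight intermediate sets and proving $a_1=|A_1|$, $a_2=-|A_2|-2|A_2'|$, $a_3=|A_3|-|A_3'|$, $a_4=-|A_4|$, $a_5=|A_5|-|A_5'|$; here the overall factor $2$ in $a_2$ is resolved by distinguishing whether exactly one ($A_2$) or both ($A_2'$) of the cycles containing $n-1$ and $n$ are improperly weighted, i.e.\ by the number of ways to adjoin a panhandle element to the flagged set --- not, as your heuristic suggests, by counting splittings of overflowing weight between $x_{n-1}$ and $x_n$. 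One then needs the explicit contribution table: a permutation in $S_2$ lies in $A_1$, $A_2$ and $A_3$ and contributes $1-1+1=1$; one with both panhandle cycles improper lies in $A_1$, $A_2'$ and $A_3$ and contributes $1-2+1=0$; one with $n-1$ and $n$ in a common improper cycle lies in $A_1$ and $A_3'$ and contributes $1-1=0$; and so on. It is only this case analysis that produces the equality condition $w(\mathfrak{c})=\sum_{i\in\mathfrak{c},\,i\neq n}c_i$ in $S_2$ versus the strict inequality in $S_3$, and the drop from $m+1$ to $m$ cycles in $S_3,S_4$. Without it the claimed identity $a_1+a_2+a_3=|S_1|+|S_2|$ and $a_4+a_5=-|S_3|-|S_4|$ remains unverified, so the proposal is the correct skeleton of the paper's argument but stops short of a proof.
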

\begin{proof}
As we saw before, the Ehrhart polynomial of $(n-1)!\Delta_{k,(1^{(n-2)},2),(c_1,\dots,c_{n-2},1)}$ is equal to
\begin{align*}
    &=\sum_{m=0}^{n-1}t^m\sum_{i=0}^{k-1}(-1)^i\sum_{v=0}^{k-1}(k-v)^mP^{n-1-m}_{-i+1,n-1-i}\rho_{\mathbf{c}',i}(v)\\
    &+\sum_{m=0}^{n-1}t^{m+1}\sum_{i=0}^{k-1}(-1)^{i+1}\sum_{v=0}^{k-2}(k-v-1)^mP^{n-1-m}_{-i,n-2-i}\rho_{\mathbf{c}',i}(v)\\
    &+2\sum_{m=0}^{n-1}t^m\sum_{i=0}^{k-1}(-1)^{i+1}\sum_{v=0}^{k-2}(k-v-1)^mP^{n-1-m}_{-i,n-2-i}\rho_{\mathbf{c}',i}(v)\\
    &+ \sum_{m=0}^{n-1}t^{m+1}\sum_{i=0}^{k-1}(-1)^i\sum_{v=0}^{k-1}(k-v-1)^mP^{n-1-m}_{-i-1,n-3-i}\rho_{\mathbf{c}',i}(v)\\
    &+\sum_{m=0}^{n-1}t^m\sum_{i=0}^{k-1}(-1)^i\sum_{v=0}^{k-2}(k-v-1)^mP^{n-1-m}_{-i-1,n-3-i}\rho_{\mathbf{c}',i}(v),
\end{align*}
where in the second and third line, we absorbed the minus sign in front of the sum into the $(-1)^{i+1}$ term. The coefficient of $t^m$ can be read off to be the sum of the terms
\begin{align*}
    &a_1 = \sum_{i=0}^{k-1}(-1)^i\sum_{v=0}^{k-1}(k-v)^mP^{n-1-m}_{-i+1,n-1-i}\rho_{\mathbf{c}',i}(v)\\
    &a_2 = 2\sum_{i=0}^{k-1}(-1)^{i+1}\sum_{v=0}^{k-2}(k-v-1)^mP^{n-1-m}_{-i,n-2-i}\rho_{\mathbf{c}',i}(v)\\
    &a_3 = \sum_{i=0}^{k-1}(-1)^i\sum_{v=0}^{k-2}(k-v-1)^mP^{n-1-m}_{-i-1,n-3-i}\rho_{\mathbf{c}',i}(v)\\
    &a_4 = \sum_{i=0}^{k-1}(-1)^{i+1}\sum_{v=0}^{k-2}(k-v-1)^{m-1}P^{n-m}_{-i,n-2-i}\rho_{\mathbf{c}',i}(v)\\
    &a_5 = \sum_{i=0}^{k-1}(-1)^i\sum_{v=0}^{k-2}(k-v-1)^{m-1}P^{n-m}_{-i-1,n-3-i}\rho_{\mathbf{c}',i}(v)\\
\end{align*}

Using similar reasoning as in the proof of Theorem \ref{thm:CombInt}, we have the following interpretation of the values above.
\begin{itemize}
    \item If $k<n-1$ or $m+1>1$, let $A_1$ be the set of cycle-ordered, weighted permutations $(\sigma,p,w)$ of type $(n,m+1,k)$ where the cycles that do not contain $n-1$ or $n$ are properly $(\mathbf{a},\mathbf{c})$-weighted. (Note that there are no restrictions on the weight of the cycles containing $n-1$ or $n$.) Otherwise, $k=n-1$, $m+1=1$ and we let $A_1$ be the set of cycle-ordered, weighted permutations of type $(n,1,n-1)$. Then $a_1=|A_1|$.
    
    \item Let $A_2$ be the set of cycle-ordered, weighted permutations $(\sigma,p,w)$ of type $(n,m+1,k)$ where the cycles that do not contain $n-1$ or $n$ are properly $(\mathbf{a},\mathbf{c})$-weighted, $n-1$ and $n$ are in distinct cycles, and exactly one of the cycles containing $n-1$ or $n$ is improperly $(\mathbf{a},\mathbf{c})$-weighted.
    
    Let $A_2'$ be defined in the same way as $A_2$ except both of the cycles containing $n-1$ and $n$ are improperly $(\mathbf{a},\mathbf{c})$-weighted.
    
    Then $a_2=-|A_2|-2|A_2'|$
    
    \item Let $A_3$ be the set of cycle-ordered, weighted permutations $(\sigma,p,w)$ of type $(n,m+1,k)$ such that
    \begin{itemize}
        \item the cycles not containing $n-1$ or $n$ are properly $(\mathbf{a},\mathbf{c})$-weighted,
        \item $n-1$ and $n$ are in distinct cycles, the cycle containing $n-1$ is improperly $(\mathbf{a},\mathbf{c})$-weighted and the weight of the cycle $\mathfrak{c}$ containing $n$ is at least $\sum_{i\in \mathfrak{c}, i\neq n} c_i$.
    \end{itemize}
    
    If $m+1>1$, let $A_3'$ be the set of cycle-ordered, weighted permutations $(\sigma,p,w)$ of type $(n,m+1,k)$ such that
    \begin{itemize}
        \item the cycles not containing $n-1$ or $n$ are properly $(\mathbf{a},\mathbf{c})$-weighted,
        \item $n-1$ and $n$ are in the same cycle, which is improperly $(\mathbf{a},\mathbf{c})$-weighted.
    \end{itemize}
    Otherwise, $m+1=1$ and we let $A_3'=\emptyset$.
    Then $a_3 = |A_3|-|A_3'|$.
    
    \item Let $A_4$ be the set of cycle-ordered, weighted permutations $(\sigma,p,w)$ of type $(n,m,k)$ such that $n-1$ and $n$ are in distinct cycles, the cycle containing $n-1$ is improperly $(\mathbf{a},\mathbf{c})$-weighted, and the cycles not containing $n-1$ or $n$ are properly $(\mathbf{a},\mathbf{c})$-weighted. Then $a_4=-|A_4|$.
    
    \item Let $A_5$ be the set of cycle-ordered, weighted permutations $(\sigma,p,w)$ of type $(n,m,k)$ such that 
    \begin{itemize}
        \item the cycles not containing $n-1$ or $n$ are properly $(\mathbf{a},\mathbf{c})$-weighted,
        \item $n-1$ and $n$ are in distinct cycles, and the cycle containing $n-1$ is improperly $(\mathbf{a},\mathbf{c})$-weighted,
        \item the weight of the cycle $\mathfrak{c}$ containing $n$ is at least $\sum_{i\in \mathfrak{c}, i\neq n} c_i$.
    \end{itemize}
    
    If $m>1$, let $A_5'$ be the set of cycle-ordered, weighted permutations $(\sigma,p,w)$ of type $(n,m,k)$ such that 
    \begin{itemize}
        \item the cycles not containing $n-1$ or $n$ are properly $(\mathbf{a},\mathbf{c})$-weighted.
        \item $n-1$ and $n$ are in the same cycle, which is improperly $(\mathbf{a},\mathbf{c})$-weighted.
    \end{itemize}
    Otherwise, $m=1$ and we let $A_5'=\emptyset$.
    Then $a_5 = |A_5| - |A_5'|$.
\end{itemize}

We can now see that $a_1 + a_2 + a_3 = |S_1|+|S_2|$. To see this, let $(\sigma,p,w)$ be of type $(n,m+1,k)$. 
\begin{itemize}
    \item If $(\sigma,p,w) \in S_1$, then it is contained in $A_1$ and contributes $+1$ to the sum $a_1 + a_2 + a_3$.
    \item If $(\sigma,p,w)\in S_2$, then it is contained in $A_1,\, A_2$, and $A_3$ and contributes $1-1+1=1$ to the sum. Note in the case that $m+1=1$ we have that $S_2=\emptyset$.
    \item If $(\sigma,p,w)$ has some cycle not containing $n-1$ or $n$ that is improperly $(\mathbf{a},\mathbf{c})$-weighted, then it not contained in any $A_i$ or $A_i'$ and does not contribute to the sum.
    \item Assume that $n-1$ and $n$ are in distinct cycles and exactly one of these cycles is improperly $(\mathbf{a},\mathbf{c})$-weighted. Additionally, if $n-1$ is improperly $(\mathbf{a},\mathbf{c})$-weighted, then the cycle $\mathfrak{c}$ containing $n$ has weight less than $\sum_{i\in \mathfrak{c},\, i\neq n}c_i$. If the remaining cycles not containing $n-1$ or $n$ are properly $(\mathbf{a},\mathbf{c})$-weighted, then $(\sigma,p,w)$ is contained in $A_1$ and $A_2$ and thus contributes $1-1=0$ to the sum.
    \item If $n-1$ and $n$ are in distinct cycles which are both improperly $(\mathbf{a},\mathbf{c})$-weighted and the cycles not containing $n-1$ or $n$ are properly $(\mathbf{a},\mathbf{c})$-weighted, then $(\sigma,p,w)$ is contained in $A_1$, $A_2'$, and $A_3$ and contributes $1-2+1=0$ to the sum.
    \item If $n-1$ and $n$ are in the same cycle which is improperly $(\mathbf{a},\mathbf{c})$-weighted, and the remaining cycles are properly $(\mathbf{a},\mathbf{c})$-weighted, then $(\sigma,p,w)$ is contained in $A_1$ and $A_3'$ and contributes $1-1=0$ to the sum.
\end{itemize}

It can be seen, using reasoning similar to the above argument showing $a_1+a_2+a_3 = |S_1|+|S_2|$, that $a_4 + a_5 = -|S_3| - |S_4|$.

This completes the proof.
\end{proof}

Theorem \ref{thm:main} shows that the coefficient of $t^m$ can be viewed as a difference of the cardinality of a set of certain cycle-ordered, weighted permutations with $m+1$ cycles and cycle-ordered, weighted permutations with $m$ cycles. It is possible that a generalization of Theorem \ref{thm:main} to general panhandle matroids would involve an alternating sum of the cardinality of sets of cycle-ordered, weighted permutations with different numbers of cycles. However, a straightforward generalization of Theorem \ref{thm:main} to general panhandle matroids seems unwieldy, and a new understanding of the coefficients in Theorem \ref{thm:main} would likely be needed in order to obtain such a generalization.

When $\mathbf{c}$ consists of all 1's, we are able to use Theorem \ref{thm:main} to show that the coefficients are positive. We note that $\Delta_{k,(1^{(n-2)},2),\mathbf{1}}$ is the base polytope of the panhandle matroid $\textrm{Pan}_{k,n-2,n}$ defined in \cite{hanelyetal}, where a promising method to prove Ehrhart positivity of panhandle matroids is outlined. However, the proof we present uses a different approach than what is suggested there.

\begin{teo}\label{thm:positivity}
Let $\mathbf{1}\in \mathbb{Z}^n$ be the tuple of all $1$'s. Let $0 \leq m \leq n-1$ if $1\leq k\leq n-1$ and $0\leq m\leq 2$ if $k = n-1$. Then the coefficient of $t^m$ in $(n-1)!\ehr(\Delta_{k,(1^{(n-2)},2),\mathbf{1}},t)$ is positive.
\end{teo}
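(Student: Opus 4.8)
The plan is to prove positivity of the coefficient $[t^m](n-1)!\ehr(\Delta_{k,(1^{(n-2)},2),\mathbf{1}},t)$ directly from the combinatorial formula $|S_1\cup S_2| - |S_3\cup S_4|$ of Theorem \ref{thm:main} by exhibiting an injection from $S_3\cup S_4$ into $S_1\cup S_2$, so that the difference of cardinalities is manifestly nonnegative, and then arguing separately that the difference is in fact strictly positive. Specializing to $\mathbf{c}=\mathbf{1}$ simplifies the proper-weighting conditions of Definition \ref{def:mainComb} enormously: for a cycle $\mathfrak{c}$ of length $\ell$ not meeting $\{n-1,n\}$, proper weighting means $w(\mathfrak{c})<\ell$; if $\mathfrak{c}$ meets $\{n-1,n\}$, then writing $\ell' = |\{i\in\mathfrak{c}: i\leq n-2\}|$, proper weighting means $w(\mathfrak{c})<1+\ell'$. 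The key observation I would exploit is that elements of $S_3$ and $S_4$ are cycle-ordered weighted permutations with only $m$ cycles (rather than $m+1$), while the cycle containing $n-1$ is improperly weighted, i.e. carries an \emph{excess} of weight; the natural move is to use that excess weight and the improper cycle structure to manufacture an extra cycle, landing in the $(m+1)$-cycle sets $S_1$ or $S_2$.

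First I would set up the injection. For a member of $S_4$ (where $n-1$ and $n$ lie in a common improperly weighted cycle, $m>1$) or of $S_3$ (where $n-1$ and $n$ lie in distinct cycles, the $n-1$-cycle improper and the $n$-cycle having $w(\mathfrak{c})<\sum_{i\in\mathfrak{c},\,i\neq n}c_i$), the idea is to split off a new cycle carved from the surplus weight on the improper $(n-1)$-cycle, or to redistribute elements of $\{n-1,n\}$, producing an object with $m+1$ cycles whose weighting profile matches the defining conditions of $S_1$ or $S_2$; I would also need to update the cycle ordering $p$ (a permutation of $[m+1]$ with $p(m+1)=1$) and verify that the total type $(n,m+1,k)$ is preserved, using that $k=w(\sigma)+\mathrm{des}(p)$ and tracking exactly how the descent count of $p$ changes when a cycle is inserted into the order. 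The map should be engineered so that its image is recognizable and the preimage reconstructible from the weighting data, which is what makes it injective; membership in $S_1$ versus $S_2$ will be dictated by whether the created configuration has $n,n-1$ separated with the $n$-cycle saturated ($S_2$) or not ($S_1$).

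Next I would handle strict positivity, which is where the stated hypotheses on $m$ and $k$ come in. Nonnegativity follows once the injection is in place, but to upgrade to $>0$ I would show that the injection is not surjective by producing, in each relevant range of $(k,m)$, at least one explicit element of $S_1$ (say a fully proper cycle-ordered weighted permutation realizing $t^m$) that is not in the image. The special boundary case $k=n-1$ flagged in Theorem \ref{thm:main}, where $S_1$ is enlarged to all cycle-ordered weighted permutations of type $(n,1,n-1)$ when $m+1=1$, will need separate bookkeeping, and this is precisely why the theorem restricts to $0\leq m\leq 2$ when $k=n-1$; I would check these low-dimensional cases by hand or by directly counting against $S_3\cup S_4$.

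The main obstacle I anticipate is constructing the injection so that it simultaneously (i) preserves the type, in particular the delicate interaction between total weight $w(\sigma)$ and $\mathrm{des}(p)$ under reordering of cycles, and (ii) respects the asymmetry between $n-1$ and $n$ built into the definitions of $S_2,S_3,S_4$ (the $n-1$-cycle is always the improper one, whereas the $n$-cycle is saturated or subsaturated). Getting the descent-count accounting exactly right when an extra cycle is inserted, while keeping the map reversible on its image, is the technical heart of the argument; everything else is routine verification of the proper/improper inequalities in the $\mathbf{c}=\mathbf{1}$ specialization.
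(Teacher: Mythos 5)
Your overall strategy is the same as the paper's: build injections from $S_3$ and $S_4$ into $S_1\cup S_2$ with disjoint images, then exhibit an element of $S_1\cup S_2$ outside both images to get strict positivity. (The paper also disposes of the boundary case $k=n-1$ at the outset by observing that $\Delta_{n-1,(1^{(n-2)},2),\mathbf{1}}$ is integrally equivalent to the standard $2$-simplex, which is cleaner than case-checking.) However, as written the proposal has a genuine gap: the injections are never constructed, and their construction is essentially the entire content of the proof. Saying that the map ``should be engineered so that its image is recognizable and the preimage reconstructible'' states the goal rather than achieving it; the type-preservation and descent bookkeeping that you correctly flag as ``the technical heart'' is exactly what remains to be done.

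Moreover, the one concrete hint you give about how the splitting should go points in a direction that does not match what actually works. For $(\sigma,p,w)\in S_3$ the paper does \emph{not} carve a new cycle out of the surplus weight of the improper cycle containing $n-1$; it splits the cycle $(i_1,\dots,i_r,n)$ containing $n$ at a position determined by that cycle's own weight (which is strictly less than $r$ by the defining condition of $S_3$, guaranteeing the split is possible), assigns the new cycle $(i_1,\dots,i_{w+1})$ a weight $d$ read off from the rank of $i_w$ among $i_1,\dots,i_w$, saturates (or nearly saturates) the new $n$-cycle, and compensates on the $(n-1)$-cycle, with a further $\pm 1$ correction depending on whether the reordering of cycles creates a new descent of $p$. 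The $(n-1)$-cycle may remain improperly weighted after this, which is why the target must be $S_1\cup S_2$ and not just $S_1$. For $S_4$ one splits the common cycle at $n-1$ and saturates the resulting $n$-cycle. Injectivity is then proved by explicitly reversing each construction (merging the cycle preceding the $n$-cycle back in, with the elements rewritten so the split point is recoverable from the recorded weight), and disjointness of the images comes from whether the cycle preceding the $n$-cycle contains $n-1$. Finally, the witness for strict positivity is obtained by taking a $\mathbf{1}$-compatible cycle-ordered weighted permutation of $[n-1]$ of type $(n-1,m+1,k)$ and inserting $n$ immediately after $n-1$ in its cycle, so that $n-1$ and $n$ share a cycle --- a configuration that cannot occur in either image. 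None of these steps is routine, and without them the argument does not establish nonnegativity, let alone positivity.
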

\begin{proof}
Let $S_1,S_2,S_3,S_4$ be defined as in Theorem \ref{thm:main}. We will show that there are injections $f_1: S_3\rightarrow S_1\cup S_2$ and $f_2: S_4\rightarrow S_1\cup S_2$ such that Im$(f_1)\cap \textrm{Im}(f_2) = \emptyset$. Then we will show that there is an element of $S_1\cup S_2$ that is not contained in Im$(f_1)\cup \textrm{Im}(f_2)$. This will complete the proof.

We note that when $k=n-1$, $\Delta_{k,(1^{(n-2)},2),\mathbf{1}}$ is integrally equivalent to the standard 2-dimensional simplex, which is well known to be Ehrhart positive. Therefore, we assume throughout the proof that $1\leq k\leq n-2$; in particular, $\Delta_{k,(1^{(n-2)},2),\mathbf{1}}$ is $(n-1)$-dimensional.

We first note that if $m=0$ or $m=1$, then $S_3,S_4= \emptyset$, so we may assume that $m\geq 2$.

Let $(\sigma,p,w)\in S_3$; we define $(\sigma',p',w') = f_1((\sigma,p,w))$ as follows. We note that the condition on the weight of the cycle containing $n$ implies that this cycle contains some element other than $n$.
First we write the cycle containing $n$ as $(i_1,\dots,i_r,n)$, and we break this cycle into two by $(i_1,\dots,i_{w((i_1,\dots,i_r,n))+1})(i_{w((i_1,\dots,i_r,n))+2},\dots,i_r,n)$. We obtain $\sigma'$ and the ordering of its cycles given by $p'$ by simply replacing $(i_1,\dots,i_r,n)$ with 
\[(i_1,\dots,i_{w((i_1,\dots,i_r,n))+1})(i_{w((i_1,\dots,i_r,n))+2},\dots,i_r,n)
\] 
in the ordering with the following exception: if $(i_1,\dots,i_{w((i_1,\dots,i_r,n))+1})$ contains the element 1, then we place $(i_{w((i_1,\dots,i_r,n))+2},\dots,i_r,n)$ at the beginning in the ordering of the cycles.

For example, if $\sigma$ and the order of its cycles is $(2\ 8)(4\ 3\ 5\ 7)(1\ 6)$ where $w((4\ 3\ 5\ 7))=1$, then $\sigma'$ and the order of its cycles is given by $(2\ 8)(4\ 3)(5\ 7)(1\ 6)$.

We initially define a weight $w^*$ on the cycle $\mathfrak{c}$ of $\sigma'$ containing $n-1$ that will be modified later. Notice that $p'$ has either the same or one more descent than $p$. If $p'$ has the same number of descents as $p$, then we take $w^*(\mathfrak{c}) = w(\mathfrak{c})$, otherwise, we take $w^*(\mathfrak{c}) = w(\mathfrak{c})-1$. The reason why we make this definition is to ensure that the weight function $w'$ defined below will make $(\sigma',p',w')$ of type $(n,m+1,k)$ rather than $(n,m+1,k+1)$.

The weight function $w'$ is defined in the same way as $w$ on all cycles not containing $n-1$ or $n$. 

We define $w'((i_1,\dots,i_{w((i_1,\dots,i_r,n))+1}))$ to be the integer $d$ where $i_{w((i_1,\dots,i_r,n))}$ is the $(d+1)$'th smallest element among $i_1,\dots,i_{w((i_1,\dots,i_r,n))}$. For example, if $i_{w((i_1,\dots,i_r,n))}$ is the smallest among these values, then $d=0$, or if $i_{w((i_1,\dots,i_r,n))}$ is the largest, then $d=w((i_1,\dots,i_r,n))$. 

Let $\mathfrak{c}$ be the cycle containing $n-1$. We define 
\[
w'((i_{w((i_1,\dots,i_r,n))+2},\dots,i_r,n))=\textrm{min}\left\{ \sum_{j=w((i_1,\dots,i_r,n))+2}^r c_{i_j},\, w((i_1,\dots,i_r,n))-d + w^*(\mathfrak{c})  \right\}.
\]
Finally, we define
\[
w'(\mathfrak{c})=w^*(\mathfrak{c}) - (w'((i_{w((i_1,\dots,i_r,n))+2},\dots,i_r,n)) -(w((i_1,\dots,i_r,n))-d)).
\]
The resulting cycle-ordered, weighted permutation $(\sigma',p',w')$ is either properly $(\mathbf{a},\mathbf{1})$-weighted, or the cycle containing $n-1$ is improperly $(\mathbf{a},\mathbf{1})$-weighted, every other cycle is properly $(\mathbf{a},\mathbf{1})$-weighted, and the cycle $(i_{w((i_1,\dots,i_r,n))+2},\dots,i_r,n)$ containing $n$ has weight $\sum_{j=w((i_1,\dots,i_r,n))+2}^r c_{i_j}$. Also, we have that either $w'(\sigma')=w(\sigma)$ and $|\Des(p')|=|\Des(p)|$ or  $w'(\sigma')=w(\sigma)-1$ and $|\Des(p')|=|\Des(p)|+1$, so $(\sigma',p',w')$ is of type $(n,m+1,k)$. Therefore, $(\sigma',p',w')$ is an element of $S_1\cup S_2$.

To see that $f_1$ is injective, we show that the above process in the definition of $f_1$ can be reversed. Let $(\sigma',p',w')$ be in the image of $f_1$. Let $(i_1,\dots,i_{r'})$ be the cycle preceding the cycle containing $n$ (if the cycle containing $n$ is the first cycle, then we mean $(i_1,\dots,i_{r'})$ to be the last cycle), where we have written these elements so that $i_{r'}$ is the $(w'((i_1,\dots,i_{r'}))+1)$'th smallest element among $i_1,\dots,i_{r'}$. Then we merge the cycles $(i_1,\dots,i_{r'})(j_1,\dots,j_s,n)$ into one cycle: $(i_1,\dots,i_{r'},j_1,\dots,j_s,n)$ and we keep the relative ordering of the cycles to obtain the permutation and ordering $\sigma$ and $p$. We then define $w((i_1,\dots,i_{r'},j_1,\dots,j_s,n))=r'-1$, and for the remaining cycles of $\sigma$ not containing $n-1$, $w$ is defined in the same way as $w'$. For the cycle $\mathfrak{c}$ containing $n-1$, we define $w(\mathfrak{c})$ to be 
\[
w(\mathfrak{c}) = w'(\mathfrak{c})+w'((i_1,\dots,i_{r'})) + w'((j_1,\dots,j_s,n)) - (r'-1)
\]
if $p$ has the same number of descents as $p'$, and 
\[
w(\mathfrak{c}) = w'(\mathfrak{c})+w'((i_1,\dots,i_{r'})) + w'((j_1,\dots,j_s,n)) - (r'-1) + 1
\]
if $p$ has one less descent than $p'$.

For example, if $\sigma'$ and the ordering of its cycles is given by $(2\ 9)(4\ 6\ 3\ 7)(5\ 10)(1\ 8)$ with weights $2,2,1,1$ respectively, then we arrange the elements of $(4\ 6\ 3\ 7)$ so that its 3rd smallest element is written on the right: $(3\ 7\ 4\ 6)$. Then we merge the cycles $(3\ 7\ 4\ 6)(5\ 10)$ into one to obtain the permutation $(2\ 9)(3\ 7\ 4\ 6\ 5\ 10)(1\ 8)$ and we give the weights $2,3,1$ respectively.

Thus, $f_1$ is injective.

Now we define the injection $f_2: S_4\rightarrow S_1\cup S_2$. Let $(\sigma,p,w)\in S_4$, and let $(i_1,\dots,i_r,n-1,j_1,\dots,j_s,n)$ be the cycle containing $n-1$ and $n$. We define $f_2((\sigma,p,w))=(\sigma',p',w')$ as follows. The permutation $\sigma'$ and the ordering of its cycles is obtained by simply splitting the cycle $(i_1,\dots,i_r,n-1,j_1,\dots,j_s,n)$ into two cycles $(i_1,\dots,i_r,n-1)(j_1,\dots,j_s,n)$, where in the case that 1 is an element among $i_1,\dots,i_r$, then $(j_1,\dots,j_s,n)$ is placed in the beginning in the ordering of the cycles. We define
\[
w'((j_1,\dots,j_s,n))=\sum_{i=1}^s c_{j_i}
\]
and 
\begin{align*}
&w'((i_1,\dots,i_r,n-1))= \\
&\begin{cases} w((i_1,\dots,i_r,n-1,j_1,\dots,j_s,n)) - \sum_{i=1}^s c_{j_i} & \textrm{if $|\Des(p')|= |\Des(p)|$}\\
w((i_1,\dots,i_r,n-1,j_1,\dots,j_s,n)) -1 - \sum_{i=1}^s c_{j_i} & \textrm{if $|\Des(p')| = |\Des(p)|+1$}.
\end{cases}
\end{align*}

An argument similar to the one above by merging the cycles containing $n-1$ and $n$ shows that this process can be reversed, and thus $f_2$ is injective. Also, the cycle containing $n-1$ is potentially the only cycle that is improperly $(\mathbf{a},\mathbf{1})$-weighted, and since $w'((j_1,\dots,j_s,n))=\sum_{i=1}^s c_{j_i}$ and $(\sigma',p',w')$ is of type $(n,m+1,k)$, $(\sigma', p', w')\in S_1\cup S_2$.

Notice that an element in Im$(f_1)$ has the property that the cycle preceding the cycle containing $n$ does not contain the element $n-1$, and this is not the case for an element in Im$(f_2)$. This shows that Im$(f_1)\cap$Im$(f_2) = \emptyset$.

The above arguments implies that the coefficient is nonnegative. Now we prove that it is in fact positive. To this end, it suffices to show that there is an element in $(S_1\cup S_2)\setminus (\textrm{Im}(f_1)\cup \textrm{Im}(f_2))$. We can assume that $m<n-1$ since we know that the coefficient of $t^{n-1}$ is positive. Let $(\sigma,p,w)$ be a $((1^{(n-1)}),(1^{(n-1)}))$-compatible cycle-ordered, weighted permutation of type $(n-1,m+1,k)$ (recall that the number of such $(\sigma,p,w)$ is the coefficient of $t^m$ in $\ehr(\Delta_{k,n},t)$). We can modify the permutation $\sigma$ by simply replacing the element $n-1$ with $n-1\ n$ in its cycle decomposition. For example, if $\sigma=(1\ 5\ 4)(2\ 3)$, then we obtain the permutation $(1\ 5\ 6\ 4)(2\ 3)$ This results in an $(\mathbf{a},\mathbf{1})$-compatible cycle-ordered, weighted permutation of type $(n,m+1,k)$ where $n-1$ and $n$ are in the same cycle, which is not an element of $(\textrm{Im}(f_1)\cup \textrm{Im}(f_2))$. This completes the proof.
\end{proof}

\begin{rem}
In the proof of Theorem \ref{thm:positivity}, the injection $f_2$ can be defined analogously when $\mathbf{c}$ is of the more general form $(c_1,\dots,c_{n-2},1)$. However, the injection $f_1$ does not appear to carry over in a simple way to this more general case, so the result of Theorem \ref{thm:positivity} may be extended by finding a suitable injection to replace $f_1$.
\end{rem}

\section{Acknowledgements}

The author would like to thank Luis Ferroni and the anonymous referee for helpful comments and suggestions.

\newcommand{\etalchar}[1]{$^{#1}$}
\providecommand{\bysame}{\leavevmode\hbox to3em{\hrulefill}\thinspace}
\providecommand{\MR}{\relax\ifhmode\unskip\space\fi MR }
\providecommand{\MRhref}[2]{%
  \href{http://www.ams.org/mathscinet-getitem?mr=#1}{#2}
}
\providecommand{\href}[2]{#2}

\end{document}